\newtheorem{theorem}{Theorem}[section]
\newtheorem{definition}[theorem]{Definition}
\newtheorem{lemma}[theorem]{Lemma}
\newtheorem{remark}[theorem]{Remark}
\numberwithin{equation}{section}
\newcommand{\HH}{ \mathcal{H}}
\newcommand{\HM}{ \widetilde{\mathcal{H}}}
\newcommand{\h}{\mathfrak H}
\newcommand{\FF}{ \mathcal{F}}
\newcommand{\FM}{ \widetilde{\mathcal{F}}}
\newcommand{\Q}{\mathbb{Q}}
\newcommand{\Z}{\mathbb{Z}}
\newcommand{\R}{\mathbb{R}}
\newcommand{\I}{\mathbb{I}}
\newcommand{\id}{\operatorname{id}}
\newcommand{\End}{\operatorname{End}}
\newcommand{\Aut}{\operatorname{Aut}}
\newcommand{\W}{\Theta}
\newcommand{\E}{\mathcal{E}}
\newcommand{\begin{tikzpicture}[scale=0.3,baseline={([yshift=-.5ex]current bounding box.center)}]
\def\cz{5}
\def\wi{0.5}

\newcommand{\ci}[1]{	
	\fill[black] (#1) circle (\cz pt);
	\draw (#1) circle (\cz pt);
}

\coordinate (R) at (0,0);
\ci{R}
\end{tikzpicture}}{\begin{tikzpicture}[scale=0.3,baseline={([yshift=-.5ex]current bounding box.center)}]
\def\cz{5}
\def\wi{0.5}

\newcommand{\ci}[1]{	
	\fill[black] (#1) circle (\cz pt);
	\draw (#1) circle (\cz pt);
}

\coordinate (R) at (0,0);
\ci{R}
\end{tikzpicture}}
\newcommand{\w}{w}
\newcommand{\f}{f}
\title{Rooted tree maps and the Kawashima relations for multiple zeta values}
\author{Henrik Bachmann}
\address{Graduate School of Mathematics,  Nagoya University, Nagoya, Japan.}
\email{henrik.bachmann@math.nagoya-u.ac.jp}
\author{Tatsushi Tanaka}
\address{Department of Mathematics, Kyoto Sangyo University, Kyoto, Japan.}
\email{t.tanaka@cc.kyoto-su.ac.jp}
\subjclass[2010]{05C05, 05C25, 11M32, 16T05}
\keywords{Hopf algebra of rooted trees, noncommutative polynomial algebra, multiple zeta values, Kawashima relations}
\begin{document}
\maketitle

\begin{abstract} 
Recently, inspired by the Connes-Kreimer Hopf algebra of rooted trees, the second named author introduced rooted tree maps as a family of linear maps on the noncommutative polynomial algebra in two letters. These give a class of relations among multiple zeta values, which are known to be a subclass of the so-called linear part of the Kawashima relations. In this paper we show the opposite implication, that is the linear part of the Kawashima relations is implied by the relations coming from rooted tree maps. 
\end{abstract}

\section{Introduction}
Rooted tree maps were introduced in \cite{T} and they assign to a rooted tree a linear map on the space $\h = \Q\langle x,y \rangle$ of noncommutative polynomials in $x$ and $y$. One application of these maps is that any admissible word evaluated at a rooted tree map gives a $\Q$-linear relation between multiple zeta values. To prove this result, one shows that these type of relations follow from a special case, the linear part, of the so called Kawashima relations.

The purpose of this note is to show, that the rooted tree maps relations are actually equivalent to the linear part of the Kawashima relations. 

For $k_1\geq 2, k_2,\dots,k_r \geq 1$  the multiple zeta values are defined by
\begin{equation} \label{eq:defmzv}
\zeta(k_1,\dots,k_r) = \sum_{m_1 > \dots > m_r > 0} \frac{1}{m_1^{k_1} \dots m_r^{k_r}} \,.
\end{equation}
To state the Kawashima relations we need to introduce some notations. Denote by $\h^1 = \Q + \h y$ the subspace of words in $\h$, which end in $y$. The space $\h^1$ is spanned by the monomials $z_{k_1}\dots z_{k_r}$ with $k_1,\dots,k_r \geq 1$, where $z_k=x^{k-1}y$. On $\h^1$ one can define recursively the harmonic product $\ast$ by $1 \ast w = w \ast 1 = w$ and
\[z_{k_1} v \ast z_{k_2} w = z_{k_1} (v \ast z_{k_2} w) + z_{k_2} (z_{k_1} v \ast w) + z_{k_1+k_2} (v \ast w) \]
for $k_1,k_2 \geq 1$ and $v , w \in \h^1$. 
Let $\h^0 = \Q + x\h y \subset \h$ be the subspace of admissible words and define the $\Q$-linear map $Z: \h^0 \rightarrow \R$ on a monomial $w=z_{k_1}\dots z_{k_r}$ by $Z(w) = \zeta(k_1,\dots,k_r)$. Equipped with the harmonic product $\ast$ both $\h^1$ and $\h^0$ are commutative $\Q$-algebras  and it is a well-known fact, that $Z$ is an algebra homomorphism from $\h^0$ to the algebra of multiple zeta values. Define the automorphism $\varphi \in \Aut(\h)$ (with respect to the concatenation) on the generators by $\varphi(x) =  x+y$ and $\varphi(y) = -y$ and define for words $v,w \in \h^1$ the operator $z_p v \circledast z_q w = z_{p+q} (v \ast w)$. With this the Kawashima relations can be stated as follows:
\begin{theorem}(\cite[Corollary 5.4]{K})\label{thm:kawashimarel} For all $v,w \in \h y$ and $m\geq 1$ we have
\begin{equation}
\sum_{\substack{i+j=m\\i,j \geq 1}} Z(\varphi(v) \circledast y^i) Z(\varphi(w) \circledast y^j)  = Z( \varphi(v \ast w) \circledast y^m) \,.
\end{equation}
\end{theorem}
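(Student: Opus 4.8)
The plan is to repackage Theorem~\ref{thm:kawashimarel} as a single multiplicativity statement and then to prove that statement by a generating-function (equivalently, Newton-series) argument. For $v\in\h y$ put
\[
P_v(t)=\sum_{i\ge 1}Z\bigl(\varphi(v)\circledast y^i\bigr)\,t^i .
\]
Every word occurring in $\varphi(v)\circledast y^i$ begins with $x$, because $\circledast$ raises the leading index, and ends in $y$, hence is admissible; so $P_v(t)\in t\,\R[[t]]$ is well defined. Comparing coefficients of $t^m$, the theorem is exactly the assertion
\[
P_v(t)\,P_w(t)=P_{v\ast w}(t)\qquad(v,w\in\h y),
\]
i.e.\ that $v\mapsto P_v(t)$ is a homomorphism from the harmonic algebra $(\h y,\ast)$ to $(t\,\R[[t]],\,\cdot\,)$. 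It therefore suffices to construct such a homomorphism and to identify it with $P_\bullet$.

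To construct it, I would start from the classical termwise identity for truncated multiple harmonic sums: writing $s_w(m)=\sum_{m\ge m_1>\dots>m_r>0}(m_1^{k_1}\cdots m_r^{k_r})^{-1}$ for $w=z_{k_1}\cdots z_{k_r}$, one has $s_{v\ast w}(m)=s_v(m)\,s_w(m)$ for all $m$, so that $w\mapsto\bigl(s_w(m)\bigr)_{m\ge 1}$ is already an algebra homomorphism $(\h^1,\ast)\to(\R^{\mathbb{N}},\text{pointwise product})$. The problem is to ``evaluate'' this sequence-valued homomorphism so as to recover the series $P_v$, and the device that does this is the binomial transform: on the generating function $G_w(z)=\sum_{m\ge1}s_w(m)z^m=\mathrm{Li}_w(z)/(1-z)$ the binomial transform is the substitution $z\mapsto z/(z-1)$, and a short computation with iterated integrals --- the one-forms transform by $dt/t\mapsto dt/t+dt/(1-t)$ and $dt/(1-t)\mapsto -dt/(1-t)$ --- shows that this substitution is precisely the automorphism $\varphi$ read off on words. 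The powers of $y$ adjoined by $\circledast y^\bullet$, recorded by the variable $t$, are exactly the new non-admissible directions that the transform produces. Concretely, I would realise the sought homomorphism as a one-parameter family $F_v(z,t)$ built from the multiple polylogarithms of $\varphi(v)$ (or, in Kawashima's discrete formulation, as a Newton series $\sum_{n\ge 0}(-1)^n c_n(v;t)\binom{s}{n}$), establish the functional identity $F_{v\ast w}=F_v\,F_w$, and show that $F_v$ specialises to $P_v$ in the appropriate limit ($z\to 1^-$, resp.\ $s\to\infty$).

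The limit step amounts to proving that $F_v(z,t)$ converges coefficientwise in $t$ with limit $P_v(t)$. Admissibility of $\varphi(v)\circledast y^i$ means that the iterated integral in question converges at the relevant endpoint, so $\varphi$ has already carried every would-be divergence to a harmless place; expanding the transformed integral and sorting terms by the number of appended $y$'s then identifies its coefficient of $t^i$ with $Z(\varphi(v)\circledast y^i)$. Passing $F_{v\ast w}=F_v F_w$ to the limit gives $P_{v\ast w}=P_v P_w$, and extracting the coefficient of $t^m$ yields Theorem~\ref{thm:kawashimarel}.

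I expect the decisive obstacle to be the identity $F_{v\ast w}=F_v\,F_w$ --- Kawashima's central lemma. The naive product of the polylogarithms of $\varphi(v)$ and $\varphi(w)$ is governed by the shuffle product; it is only after the $z\mapsto z/(z-1)$ twist, coupled with the harmonic structure of the truncated sums $s_w(m)$, that one obtains a function multiplicative for $\ast$, and making this precise requires matching the combinatorics of path composition (equivalently, of the binomial transform) against the recursive definitions of $\ast$ and of $\circledast$. A secondary but still substantial difficulty is the boundary analysis in the limit step, where one must control the growth of the multiple polylogarithms near the singular endpoint --- equivalently, the growth of the Newton series --- and verify that the regularisation recorded by $t$ removes exactly the divergent parts; the natural tools there are the standard theory of regularised iterated integrals and, on the discrete side, N\o rlund's theory of Newton series.
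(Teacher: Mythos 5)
This theorem is not proved in the paper at all: it is quoted verbatim from Kawashima \cite[Corollary 5.4]{K} and used as a black box, so there is no in-paper argument to measure your proposal against. Judged on its own terms, your reduction is correct and well observed: the family of identities for all $m\geq 1$ is precisely the multiplicativity $P_v\,P_w=P_{v\ast w}$ of the generating series $P_v(t)=\sum_{i\geq 1}Z(\varphi(v)\circledast y^i)t^i$ (and each $\varphi(v)\circledast y^i$ is indeed admissible, since $\circledast$ raises the leading index while $\varphi$ preserves $\h y$); your computation that $t\mapsto t/(t-1)$ pulls back $dt/t$ to $dt/t+dt/(1-t)$ and $dt/(1-t)$ to $-dt/(1-t)$, matching $\varphi(x)=x+y$, $\varphi(y)=-y$, correctly identifies why this particular automorphism appears. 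This is an accurate reconstruction of the architecture of Kawashima's original argument.

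The gap is that the proposal stops exactly where the proof begins. The entire mathematical content of the theorem is the identity $F_{v\ast w}=F_v\,F_w$, which you name as ``the decisive obstacle'' and ``Kawashima's central lemma'' without supplying an argument for it; the pointwise multiplicativity $s_{v\ast w}(m)=s_v(m)s_w(m)$ of truncated sums is easy, but transporting it through the binomial/Newton transform to a statement about the $\varphi$-twisted polylogarithms is precisely the hard induction on the harmonic product that occupies the bulk of Kawashima's paper, and nothing in the proposal indicates how it would be carried out. The same applies to the limit step: asserting that ``the regularisation recorded by $t$ removes exactly the divergent parts'' is the conclusion one needs, not a proof of it, and the required uniform control of the Newton series (or of the polylogarithms near $z=1$) is a genuine analytic estimate, not a formality. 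As written, this is a correct and informative roadmap to the known proof, but it defers both of its essential steps and therefore does not constitute a proof. If your intention is to use the theorem as the paper does, the honest course is to cite \cite{K}; if you intend to reprove it, the multiplicativity lemma must actually be established.
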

It is expected that Theorem \ref{thm:kawashimarel} gives all $\Q$-linear relations between multiple zeta values after evaluating the product on the left-hand side by the shuffle product formula, whose definition we will omit here. Moreover numerical experiment suggest, that the two cases $m=1,2$ are enough to obtain all linear relations. 

Rooted tree maps assign to a rooted tree a linear map on the space $\h$. Evaluated at any admissible word these give also relations between multiple zeta values. Denoting by $\HM_{d\geq 1} \subset \End(\h)$ the space of all rooted tree maps of non-zero degree (see Section \ref{sec:rtm} for precise definitions), we have the following result proven in \cite{T}.
\begin{theorem}(\cite[Theorem 1.3]{T})\label{thm:rtmrelation} For any rooted tree map $f \in \HM_{d\geq 1}$ we have $$f(\h^0) \subset \ker Z.$$
\end{theorem}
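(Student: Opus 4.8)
The plan is to derive Theorem~\ref{thm:rtmrelation} from the case $m=1$ (the \emph{linear part}) of Theorem~\ref{thm:kawashimarel}. For $m=1$ the sum on the left is empty, so that case states $Z\bigl(\varphi(v\ast w)\circledast y\bigr)=0$ for all $v,w\in\h y$. Since $z_{p}v\circledast z_{1}=z_{p+1}v=x\,z_{p}v$, one has $u\circledast y=x\,u$ for every $u\in\h y$, and the relation becomes $Z\bigl(x\,\varphi(v\ast w)\bigr)=0$. Let $\mathcal{K}$ be the $\Q$-span of the elements $x\,\varphi(v\ast w)$ with $v,w\in\h y$; then $\mathcal{K}\subseteq\h^{0}\cap\ker Z$, and it suffices to prove that every rooted tree map of positive degree maps $\h^{0}$ into $\mathcal{K}$.

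First I would reduce to connected trees. The assignment $t\mapsto f_{t}$ extends to forests multiplicatively, sending the forest product to composition in $\End(\h)$, so $\HM_{d\ge 1}$ is spanned by composites $f_{t_{1}}\circ\cdots\circ f_{t_{k}}$ of maps attached to nonempty connected trees. Hence, once we know $f_{t}(\h^{0})\subseteq\mathcal{K}$ (in particular $f_{t}(\h^{0})\subseteq\h^{0}$) for every connected tree $t$, a composite sends $\h^{0}$ into $\h^{0}$ under its inner factors and then into $\mathcal{K}$ under the outermost one. Thus the statement reduces to: $f_{t}(\h^{0})\subseteq\mathcal{K}$ for all connected rooted trees $t$, which I would prove by induction on the number of vertices of $t$, using the grafting recursion that expresses $f_{t}$, when $t$ is obtained by attaching smaller trees $t_{1},\dots,t_{k}$ to a new root, in terms of $f_{t_{1}},\dots,f_{t_{k}}$ and the one-vertex map $f_{\fo}$.

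The base case is the one-vertex tree: one computes $f_{\fo}(w)$ for an admissible monomial $w$ and recognizes it as $x\,\varphi(v\ast w')$ for explicit $v,w'\in\h y$ built from $w$ — a finite, direct verification. For the inductive step one must transport the grafting recursion through the bilinear expression $(v,w)\mapsto x\,\varphi(v\ast w)$, and here I expect the main obstacle: a compatibility lemma is needed describing how a rooted tree map interacts with the harmonic product $\ast$ — a coproduct/Leibniz-type identity for $f_{t}$ (or for its $\varphi$-conjugate $\varphi\circ f_{t}\circ\varphi$) with respect to $\ast$ — so that, fed with the induction hypothesis for the strictly smaller maps $f_{t_{i}}$, it shows $f_{t}$ carries $\mathcal{K}$ back into $\mathcal{K}$ with $v,w$ replaced by images of suitable operators built from the $f_{t_{i}}$. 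The recursion defining $f_{t}$ is of Hopf-algebraic, admissible-cut type and is \emph{a priori} unrelated to the multiplicative data ($\ast$, $\varphi$, $\circledast$) in which the Kawashima relations live; reconciling the two — showing that rooted tree maps respect the harmonic product in exactly the way needed to stay inside $\mathcal{K}$ — is where the real work lies. Once that is in place, the reduction to connected trees, the base case, and the admissibility bookkeeping are routine.
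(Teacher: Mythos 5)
First, note that this paper does not prove Theorem \ref{thm:rtmrelation} at all: it is imported from \cite[Theorem 1.3]{T}, and Section 4 of the present paper only recalls the mechanism of that proof. Your high-level strategy --- deducing the theorem from the $m=1$ case of Theorem \ref{thm:kawashimarel} by showing that every $f(w)$, $w\in\h^0$, lies in $L_x\varphi(\h y\ast\h y)$ --- is indeed the route taken in \cite{T}, and your preliminary reductions (the identity $u\circledast y=xu$, the passage from forests to connected trees via multiplicativity) are sound, though the multiplicativity $(gh)(w)=g(h(w))$ for arbitrary words $w$ is itself a fact about the coproduct that needs citing. The genuine gap is exactly the step you defer as ``where the real work lies'': the compatibility of rooted tree maps with the harmonic product is not a technical obstacle to be resolved inside an otherwise complete argument --- it \emph{is} the entire content of the theorem. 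The actual mechanism (\cite[Cor.~4.5]{T}, quoted in Section 4 here) is the intertwining identity $f\circ\chi_x=\chi_x\circ H_{w}$ with $\chi_x=\tau L_x\varphi$, $H_w(v)=w\ast v$ and $w=\chi_x^{-1}f(y)$; it is proved by an induction through the Connes--Kreimer coproduct defining $f$, not by the grafting recursion in the subtrees $t_1,\dots,t_k$ that you propose, and without it your inductive step cannot start. Moreover, the natural output of that identity is $\tau L_x\varphi(w\ast v)$ rather than $L_x\varphi(w\ast v)$, so one also needs the $\tau$-stability $\tau L_x\varphi(\h y\ast\h y)=L_x\varphi(\h y\ast\h y)$ (\cite[Lem.~4.2, Cor.~4.7]{T}, reproved as \eqref{eq:keq1} here); the involution $\tau$ does not appear in your sketch, and your space $\mathcal K$ is not visibly preserved without it.

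A further inaccuracy: the base case is not ``a finite, direct verification.'' Showing $\partial_1(w)\in\mathcal K$ for \emph{all} admissible words $w$ is precisely the assertion that the derivation relation is contained in the linear part of the Kawashima relations, which is a nontrivial theorem requiring a uniform argument over infinitely many words, not a computation that terminates. So while you have correctly located both the target (the $m=1$ Kawashima relations) and the missing lemma, the proposal as written establishes only the easy reductions and leaves the substance of the proof open.
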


The main result of this work is the following. 
\begin{theorem} \label{thm:main10}The rooted tree maps relations are equivalent to the linear part of the Kawashima relations, i.e. Theorem \ref{thm:rtmrelation} implies the $m=1$ case of Theorem \ref{thm:kawashimarel}. 
\end{theorem}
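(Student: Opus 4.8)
The plan is to first unwind what the $m=1$ case of Theorem~\ref{thm:kawashimarel} says. On a word $g=z_pu\in\h^1$ with $p\ge 1$ and $u\in\h^1$, the operation ``$\circledast\,y$'' is just left concatenation by $x$: since $y=z_1$ and $1\ast u=u$ we get $g\circledast y=z_pu\circledast z_1=z_{p+1}(u\ast 1)=z_{p+1}u=x\,g$. When $m=1$ the index set $\{(i,j):i+j=1,\ i,j\ge 1\}$ is empty, so the left-hand side of Theorem~\ref{thm:kawashimarel} is $0$; and for $v,w\in\h y$ one has $v\ast w\in\h y$, hence $\varphi(v\ast w)\in\h y$ and $x\,\varphi(v\ast w)\in x\h y\subseteq\h^0$. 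So the $m=1$ relation is exactly the identity $Z\bigl(x\,\varphi(v\ast w)\bigr)=0$ for all $v,w\in\h y$. By Theorem~\ref{thm:rtmrelation} it therefore suffices to prove the purely algebraic statement --- with no multiple zeta values involved ---
\begin{equation}\label{eq:rtmgoal}
x\,\varphi(v\ast w)\ \in\ \sum_{f\in\HM_{d\ge 1}}f(\h^0)\qquad\text{for all }v,w\in\h y,
\end{equation}
since then applying $Z$ and Theorem~\ref{thm:rtmrelation} closes the argument.

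To prove \eqref{eq:rtmgoal} I would induct on the weight $N$ (i.e.\ word length) of $x\,\varphi(v\ast w)$, using the structural description of rooted tree maps from Section~\ref{sec:rtm}: the assignment $t\mapsto f_t$ turns the forest product of rooted forests into composition of operators and satisfies a recursion under the root-grafting operator $B_+$, with the single-vertex map as base case; also $\ast$ and $\varphi$ preserve weight, left concatenation by $x$ raises it by $1$, and a rooted tree map attached to a tree with $n$ vertices raises it by $n$. In the inductive step one starts from the defining recursion of the harmonic product,
\[
z_a v\ast z_b w=z_a(v\ast z_b w)+z_b(z_a v\ast w)+z_{a+b}(v\ast w),
\]
transports it through the concatenation homomorphism $\varphi$ (so that $z_a\mapsto(x+y)^{a-1}(-y)$), left-multiplies by $x$, and recognizes the resulting expression as a $\Q$-linear combination of rooted tree maps $f_t$ applied to admissible words which, modulo images of strictly lower-degree rooted tree maps, are again of the form $x\,\varphi(v'\ast w')$ with $v',w'\in\h y$ of smaller weight; feeding this back into the induction hypothesis yields \eqref{eq:rtmgoal}.

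The hard part will be precisely this ``recognition'' step, i.e.\ showing that the grafting and forest combinatorics governing the $f_t$ reproduces the $\varphi$-twisted harmonic product. Concretely one has to verify that the two ``new'' pieces of the recursion --- the cross term $z_b(z_a v\ast w)$ and the contracted term $z_{a+b}(v\ast w)$ --- after applying $\varphi$ and concatenating $x$ on the left, are absorbed into the recursion defining $f_t$ for the trees obtained from a single vertex by iterated grafting; equivalently, one wants a suitable generating series of rooted tree maps (built, say, from ladder and/or corolla trees) to act on $\h^0$ in the same way that $g\mapsto x\,\varphi(g)$ acts on the relevant subspace of $\h y$. Pinning down this single combinatorial identity --- as opposed to the routine bookkeeping around it --- is where the precise definition of the single-vertex map and the Hopf-algebraic recursion recalled in Section~\ref{sec:rtm} are indispensable, and it is the step I expect to carry the weight of the argument.
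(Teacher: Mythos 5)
Your reduction of the $m=1$ Kawashima relation to the purely algebraic inclusion $x\,\varphi(v\ast w)\in\sum_{f}f(\h^0)$ is exactly right, and it matches the paper's strategy: this inclusion is precisely Theorem \ref{thm:main1}. But from that point on your argument has a genuine gap, and you say so yourself: the entire content of the theorem is concentrated in the ``recognition'' step, which you describe as something you \emph{expect} to work rather than something you prove. The inductive scheme you sketch --- pushing the harmonic-product recursion $z_av\ast z_bw=z_a(v\ast z_bw)+z_b(z_av\ast w)+z_{a+b}(v\ast w)$ through $\varphi$ and $L_x$ and hoping to match it against the $B_+$/forest recursion for rooted tree maps --- is not how the correspondence between rooted tree maps and the harmonic product actually manifests, and there is no reason to believe it can be organized term-by-term along that recursion. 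The relationship is a global intertwining identity, already established in \cite{T}: for every $f\in\FM$ there is a $w\in\h y$ with $f\chi_x=\chi_x H_w$, where $\chi_x=\tau L_x\varphi$ and $H_w(v)=w\ast v$, and moreover $w=\chi_x^{-1}f(y)=-\chi_x^{-1}f(x)$.

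Given that identity, the real obstacle --- which your proposal never isolates --- is \emph{surjectivity}: to realize $\chi_x(w\ast v)=\chi_xH_w(v)$ as some $f(u)$ you must produce, for an \emph{arbitrary} $w\in\h y$, a rooted tree map $f$ with $f(x)=-\chi_x(w)$. That is, you need the evaluation map $\W:\FM\to x\h y$, $f\mapsto f(x)$, to be surjective. This is Theorem \ref{thm:isom}, and it is the paper's main technical contribution, proved by a mod-$2$ linear-algebra computation (the matrices $A_d$, $B_d$ and the determinant Lemmas of Section \ref{sec:F}) over the $2^{d-1}$-dimensional family $\FF_d$ of forests built from $B_+$ and multiplication by the one-vertex tree. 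The remaining ingredient is the $\tau$-stability $L_x\varphi(\h y\ast\h y)=\tau L_x\varphi(\h y\ast\h y)$ (from Lemma 4.2 and Corollary 4.7 of \cite{T}), needed to pass between $L_x\varphi(w\ast v)$ and $\chi_x(w\ast v)$. Your proposal contains neither the intertwining identity, nor the $\tau$-stability, nor any substitute for the surjectivity of $\W$; without these the argument does not close.
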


Let $R_k$ be the number of linearly independent rooted tree maps relations (Theorem \ref{thm:rtmrelation}) among multiple zeta values in weight $k$ (the number $k_1+\dots+k_r$ in \eqref{eq:defmzv}) and denote by $C_k$ the conjectured number of all linearly independent relations of multiple zeta values in weight $k$. For small $k$ these are given by
\bgroup
\def\arraystretch{1.3}%
\begin{table}[h!]
\centering
\begin{tabular}{c|c|c|c|c|c|c|c|c|c|c|c|c|c|c}
$k$          &   2 & 3 & 4 & 5 & 6 & 7 & 8 & 9 & 10 & 11 & 12 & 13 \\ \hline
$R_k$ &  0 & 1 & 2 & 5 & 10& 23 & 46 & 98 & 200 & 413 & 838 & 1713 \\ 
$C_k$ &  0 & 1 & 3 & 6 & 14& 29 & 60 & 123 & 249 & 503 & 1012 & 2032  
\end{tabular}
\end{table}
\egroup 


\subsection*{Acknowledgment}
The authors would like to thank the Max-Planck-Institut f\"ur Mathematik and the Hausdorff Research Institute for Mathematics  in Bonn for hospitality and support. The second author was also supported by Kyoto Sangyo University Research Grants.

\section{Rooted tree maps}\label{sec:rtm}
A rooted tree is as a finite graph which is connected, has no
cycles, and has a distinguished vertex called the root. We draw rooted trees with the root on top and we just consider rooted trees with no plane structure, which means that we for example do not distinguish between  $\begin{tikzpicture}[scale=0.25,baseline={([yshift=-.5ex]current bounding box.center)}]
\def\cz{5}
\def\wi{0.5}

\newcommand{\ci}[1]{	
	\fill[black] (#1) circle (\cz pt);
	\draw (#1) circle (\cz pt);
}

\coordinate (R) at (0,0);

\coordinate (r1) at (\wi,-1);
\coordinate (l1) at (-\wi,-1);

\coordinate (l11) at (-\wi-\wi,-2);
\coordinate (l12) at (-\wi+\wi,-2);

\draw (R) to (r1);
\draw (R) to (l1);

\draw (l1) to (l11);
\draw (l1) to (l12);

\ci{R}
\ci{r1}
\ci{l1}
\ci{l11}
\ci{l12}
\end{tikzpicture}$ and $\begin{tikzpicture}[scale=0.25,baseline={([yshift=-.5ex]current bounding box.center)}]
\def\cz{5}
\def\wi{0.5}

\newcommand{\ci}[1]{	
	\fill[black] (#1) circle (\cz pt);
	\draw (#1) circle (\cz pt);
}

\coordinate (R) at (0,0);

\coordinate (r1) at (\wi,-1);
\coordinate (l1) at (-\wi,-1);

\coordinate (r11) at (\wi-\wi,-2);
\coordinate (r12) at (\wi+\wi,-2);

\draw (R) to (r1);
\draw (R) to (l1);

\draw (r1) to (r11);
\draw (r1) to (r12);

\ci{R}
\ci{r1}
\ci{l1}
\ci{r11}
\ci{r12}
\end{tikzpicture}$. A product (given by the disjoint union) of rooted trees will be called a (rooted) forest and by $\HH$ we denote the $\Q$-algebra of forests generated by all trees. The unit of $\HH$, given by the empty forest, will be denoted by $\I$. Since we just consider trees without plane structure the algebra $\HH$ is commutative.  Due to the work of Connes and Kreimer (\cite{CK}) the space $\HH$ has the structure of a Hopf algebra. To define the coproduct on $\HH$ we first define the linear map $B_+$ on $\HH$, which connects all roots of the trees in a forest to a new root. For example we have $B_+\left(\begin{tikzpicture}[scale=0.3,baseline={([yshift=-.5ex]current bounding box.center)}]
\def\cz{5}
\def\wi{0.5}

\newcommand{\ci}[1]{	
	\fill[black] (#1) circle (\cz pt);
	\draw (#1) circle (\cz pt);
}

\coordinate (R) at (0,0);
\coordinate (r1) at (\wi,-1);
\coordinate (l1) at (-\wi,-1);

\draw (R) to (l1);
\draw (R) to (r1);

\ci{R};
\ci{l1};
\ci{r1};
\end{tikzpicture} \,\,\right) = \begin{tikzpicture}[scale=0.25,baseline={([yshift=-.5ex]current bounding box.center)}]
\def\cz{5}
\def\wi{0.5}

\newcommand{\ci}[1]{	
	\fill[black] (#1) circle (\cz pt);
	\draw (#1) circle (\cz pt);
}

\coordinate (R) at (0,0);

\coordinate (r1) at (\wi,-1);
\coordinate (l1) at (-\wi,-1);

\coordinate (l11) at (-\wi-\wi,-2);
\coordinate (l12) at (-\wi+\wi,-2);

\draw (R) to (r1);
\draw (R) to (l1);

\draw (l1) to (l11);
\draw (l1) to (l12);

\ci{R}
\ci{r1}
\ci{l1}
\ci{l11}
\ci{l12}
\end{tikzpicture}$. Clearly for every tree $t \in \HH$ there exists a unique forest $f_t \in \HH$ with $t = B_+(f_t)$, which is just given by removing the root of $t$.
The coproduct on $\HH$ can then be defined recursively for a tree $t \in \HH$ by
\[ \Delta(t) = t \otimes \I + (\id \otimes B_{+}) \circ \Delta(f_t)\]
and for a forest $f=g h$ with $g,h \in \HH$ multiplicatively by $\Delta(f) = \Delta(g)\Delta(h)$ and $\Delta(\I)= \I \otimes \I$. For example we have
\[\Delta() =  \otimes \I + \,  \otimes  + 2 \,  \otimes \begin{tikzpicture}[scale=0.3,baseline={([yshift=-.5ex]current bounding box.center)}]
\def\cz{5}
\def\wi{0.5}

\newcommand{\ci}[1]{	
	\fill[black] (#1) circle (\cz pt);
	\draw (#1) circle (\cz pt);
}

\coordinate (R) at (0,0);
\coordinate (r1) at (0,-1);

\draw (R) to (r1);

\ci{R};
\ci{r1};

\end{tikzpicture}+ \I\otimes \,.\]
 In \cite{T} the second named author uses the coproduct $\Delta$ to assign to a forest $f\in \HH$ a $\Q$-linear map on the space $\h = \Q\langle x,y \rangle$, called a rooted tree map, by the following:
\begin{definition}\label{def:rtm} For any non-empty forest $f \in \HH$, we define a $\Q$-linear map on $\h$, also denoted by $f$, recursively: For a word $w \in \h$ and a letter $u \in \{x,y\}$ we set
\begin{equation}\label{eq:defrtm}
f(w u):= M(\Delta(f)(w \otimes u))\,,
\end{equation}  
where $M(w_1 \otimes w_2) = w_1 w_2$ denotes the multiplication on $\h$. This reduces the calculation to $f(u)$ for a letter $u \in \{x,y\}$, which is defined by the following:
\begin{enumerate}[i)]
\item If $f=\,$, then $f(x) := xy$ and $f(y) := -xy$.
\item For a tree $t = B_+(f)$ we set $t(u) := R_y R_{x+2y}R_y^{-1} f(u)$,
where $R_v$ is the linear map given by $R_v(w)=wv$ ($v,w \in \h$).
\item If $f = gh$ is a forest with $g,h \neq \I$, then $f(u):=g(h(u))$.
\end{enumerate}
The rooted tree map of the empty forest $\I$ is given by the identity.
\end{definition}
By $\HM \subset \End(\h)$ we denote the space spanned by all rooted tree maps and by $\HM_d$ the space spanned by rooted tree maps of degree $d$ (number of vertices). Note that for any $f \in \HM_d$ with $d\geq 1$ we have $f(1)=0$, which follows by induction on the degree  of $f$ from \eqref{eq:defrtm} with $w=1$. 
In \cite{BT} it was shown that the derivation $\partial_n$ on $\h$, defined for $n\geq 1$ by $\partial_n(x) = x(x+y)^{n-1}y$ and $\partial_n(y) = -x(x+y)^{n-1}y$ can be written in terms of rooted tree maps. In particular the rooted tree map $ = \partial_1$ is a derivation, which will be used for various calculations in the remaining parts of this work.

\section{The space $\FF$}\label{sec:F}
Denote by $\HH_d \subset \HH$ the subspace spanned by all rooted forests of degree $d$. 
In this section we will consider a subspace of $\HH_d$ defined recursively by $\FF_1 = \Q \cdot $ and for $d\geq 2$ by
\[   \FF_d = B_+\left(\FF_{d-1}\right) +  \,\, \FF_{d-1}\,. \]
For example for $d=2,3,4$ the $\FF_d$ are given by
\begin{align*}
\FF_2 = \Q \,\,  + \Q\,\, \,\,,\qquad \FF_3 = \Q\,\,\begin{tikzpicture}[scale=0.3,baseline={([yshift=-.5ex]current bounding box.center)}]
\def\cz{5}
\def\wi{0.5}

\newcommand{\ci}[1]{	
	\fill[black] (#1) circle (\cz pt);
	\draw (#1) circle (\cz pt);
}

\coordinate (R) at (0,0);
\coordinate (r1) at (0,-1);
\coordinate (r2) at (0,-2);

\draw (R) to (r1);
\draw (r1) to (r2);
\ci{R};
\ci{r1};
\ci{r2};
\end{tikzpicture} +\Q\,\,  +\Q\,\, \,  +\Q\,\, \,  \,  \,,\\
\FF_4 = \Q\,\,\begin{tikzpicture}[scale=0.3,baseline={([yshift=-.5ex]current bounding box.center)}]
\def\cz{5}
\def\wi{0.5}

\newcommand{\ci}[1]{	
	\fill[black] (#1) circle (\cz pt);
	\draw (#1) circle (\cz pt);
}

\coordinate (R) at (0,0);
\coordinate (r1) at (0,-1);
\coordinate (r2) at (0,-2);
\coordinate (r3) at (0,-3);

\draw (R) to (r1);
\draw (r1) to (r2);
\draw (r2) to (r3);
\ci{R};
\ci{r1};
\ci{r2};
\ci{r3};
\end{tikzpicture}+\Q\,\,\begin{tikzpicture}[scale=0.3,baseline={([yshift=-.5ex]current bounding box.center)}]
\def\cz{5}
\def\wi{0.5}

\newcommand{\ci}[1]{	
	\fill[black] (#1) circle (\cz pt);
	\draw (#1) circle (\cz pt);
}

\coordinate (R) at (0,0);

\coordinate (l1) at (0,-1);
\coordinate (l11) at (\wi,-2);
\coordinate (l12) at (-\wi,-2);

\draw (R) to (l1);

\draw (l1) to (l11);
\draw (l1) to (l12);

\ci{R}
\ci{l1}
\ci{l11}
\ci{l12}
\end{tikzpicture}+\Q\,\,\begin{tikzpicture}[scale=0.3,baseline={([yshift=-.5ex]current bounding box.center)}]
\def\cz{5}
\def\wi{0.5}

\newcommand{\ci}[1]{	
	\fill[black] (#1) circle (\cz pt);
	\draw (#1) circle (\cz pt);
}

\coordinate (R) at (0,0);
\coordinate (l1) at (-\wi,-1);
\coordinate (r1) at (\wi,-1);
\coordinate (l11) at (-\wi,-2);

\draw (R) to (l1);
\draw (R) to (r1);
\draw (l1) to (l11);

\ci{R}
\ci{l1}
\ci{r1}
\ci{l11}

\end{tikzpicture}+\Q\,\,\begin{tikzpicture}[scale=0.3,baseline={([yshift=-.5ex]current bounding box.center)}]
\def\cz{5}
\def\wi{0.5}

\newcommand{\ci}[1]{	
	\fill[black] (#1) circle (\cz pt);
	\draw (#1) circle (\cz pt);
}

\coordinate (R) at (0,0);
\coordinate (a1) at (-\wi,-1);
\coordinate (a2) at (0,-1);
\coordinate (a3) at (\wi,-1);

\draw (R) to (a1);
\draw (R) to (a2);
\draw (R) to (a3);

\ci{R}
\ci{a1}
\ci{a2}
\ci{a3}

\end{tikzpicture}+ \Q\,\,\,  +\Q\,\,\,   +\Q\,\, \, \,  +\Q\,\, \,  \, \, \,.
\end{align*}
Notice that the space $\FF=\bigoplus_{d\geq 1} \FF_d \subset \HH$ is a subspace of $\HH$, but not a subalgebra since for example $ \in \FF_2$ but $\, \, \notin \FF_4$.
By definition we have  $\dim_\Q \FF_d = 2^{d-1}$. Denote by $\FM_d \subset \End(\h)$ the space spanned by all rooted tree maps corresponding to the rooted trees in $\FF_d$ and set $\FM=\bigoplus_{d\geq 1} \FM_d \subset \HM$. The main goal of this section is to prove the following.
\begin{theorem}\label{thm:isom}We have an isomorphism of $\Q$-vector spaces 
\begin{align*}
\W: \FM &\longrightarrow x \h y \\
f & \longmapsto f(x) \,.
\end{align*}
\end{theorem}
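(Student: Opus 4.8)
The plan is to prove Theorem~\ref{thm:isom} by establishing that the map $\W$ is well-defined, linear, surjective, and injective, with the bulk of the work going into surjectivity and injectivity. Linearity is immediate from the definition. For the domain and codomain to match up dimension-wise, observe that $\dim_\Q \FM_d \leq \dim_\Q \FF_d = 2^{d-1}$, while the degree-$d$ piece $x\h_{d-2}y$ of $x\h y$ (where $\h_{d-2}$ denotes words of length $d-2$ in $x,y$) also has dimension $2^{d-2}\cdot\ldots$; more precisely $x\h y$ in weight $d$ has dimension $2^{d-2}$ for $d\geq 2$, so I should first sort out the correct grading. A rooted tree map coming from a forest of degree $d$ raises the weight of a word by $d-1$ (since $\fo$ sends a letter to a word of length $2$, i.e.\ adds $1$, and each application of $B_+$ via rule~ii) composes with $R_yR_{x+2y}R_y^{-1}$, which preserves length), so $\W$ maps $\FM_d$ into $x\h_{d-1}y$, the weight-$d$ part of $x\h y$, which has dimension $2^{d-2}$ for $d \geq 2$ and dimension... wait, for $d=1$ we need $\W(\fo) = \fo(x) = xy \in x\h y$. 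So in fact I expect $\dim_\Q \FM_d = 2^{d-1}$ is \emph{not} the right count to match $x\h_{d-1}y$; rather the grading on $x\h y$ that makes this an isomorphism must be by the number of letters, and I should recount: $x\h y$ has a basis of words $x w y$ with $w$ any word in $x,y$, and grouping by $|w| = d-2$ gives $2^{d-2}$ basis elements in "tree-degree" $d$. Since $2^{d-1} \neq 2^{d-2}$, the map $\W$ is manifestly \emph{not} injective on each graded piece unless $\FM_d$ has strictly smaller dimension than $\FF_d$; so a key point will be that the rooted tree maps attached to the spanning forests of $\FF_d$ satisfy linear relations, cutting the dimension down to exactly $2^{d-2}$ (for $d\geq 2$), i.e.\ half of $\dim\FF_d$.

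Given that subtlety, here is the refined strategy. First, I would pin down the weight grading and check $\W(\FM_d) \subseteq x\h_{d-1}y$, using that $\fo$ is a derivation with $\fo(x)=xy$, $\fo(y)=-xy$, and that rule~ii) is weight-preserving on letters while rule~iii) composes maps. Second, for surjectivity I would argue by induction on weight: given a target word $xwy \in x\h_{d-1}y$, I want to exhibit a forest in $\FF_d$ whose rooted tree map sends $x$ to it. The recursive structure $\FF_d = B_+(\FF_{d-1}) + \fo\,\FF_{d-1}$ is tailor-made for this: applying $B_+$ corresponds on the level of maps to conjugating/twisting by $R_yR_{x+2y}R_y^{-1}$, and prepending $\fo$ corresponds to post-composing with the derivation $\fo$. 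I would compute how $f \mapsto B_+(f)(x)$ and $f\mapsto \fo(f(x)) = \fo(f)(x)$ act on $x\h_{d-2}y$ and show the combined images span $x\h_{d-1}y$ — concretely, that for $xwy$ one can solve for the preimage, splitting on the last letter of $w$ or using an explicit inverse to the operator $R_yR_{x+2y}R_y^{-1}$ restricted appropriately. Third, once surjectivity is known, injectivity follows from a dimension count \emph{provided} one also knows $\dim_\Q\FM_d \leq \dim_\Q x\h_{d-1}y$; but surjectivity of $\W|_{\FM_d}$ onto $x\h_{d-1}y$ already forces $\dim\FM_d \geq 2^{d-2}$, and to get equality I would show directly that $\dim\FM_d \leq 2^{d-2}$ by producing enough relations among the $2^{d-1}$ generating tree maps of $\FF_d$, or equivalently by exhibiting a spanning set of $\FM_d$ of size $2^{d-2}$. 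Then $\W|_{\FM_d}$ is a surjection between spaces of equal finite dimension, hence an isomorphism, and summing over $d$ gives the result.

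The main obstacle I anticipate is precisely the dimension bookkeeping: reconciling $\dim_\Q\FF_d = 2^{d-1}$ with the fact that $x\h y$ in weight $d$ has only $2^{d-2}$ dimensions means the passage from forests to rooted tree maps must collapse a factor of two, and I must identify and prove the exact linear relations among rooted tree maps that make $\FM_d$ have dimension $2^{d-2}$. A natural candidate for such a relation comes from comparing $B_+(g h)$ with combinations built from $B_+(g), B_+(h), \fo\,g, \fo\,h$ — i.e.\ the failure of $\FF$ to be a subalgebra, noted in the text with the example $\in\FF_2$ but $\notin\FF_4$, is exactly the source of these relations. I would make this precise by showing that, modulo the kernel of $f\mapsto f(x)$, the recursion for $\FF_d$ becomes \emph{free} in a suitable sense with only $2^{d-2}$ independent outputs. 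Alternatively, and perhaps more cleanly, I would prove injectivity directly: suppose $\sum c_f f(x) = 0$ in $x\h y$ for forests $f$ in a chosen basis-candidate of $\FM_d$; peel off the action on the word $x$, use that $\fo$ is a derivation and rule~ii) to track the leading and trailing letters, and run an induction on weight to force all $c_f = 0$. Combining a direct injectivity proof with the inductive surjectivity proof would avoid ever needing the exact value of $\dim\FM_d$ as input, deriving it instead as a consequence.
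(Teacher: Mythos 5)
There is a concrete error at the root of your strategy: the degree bookkeeping. The operator $R_yR_{x+2y}R_y^{-1}$ in rule~ii) does \emph{not} preserve length — it strips the trailing $y$ and appends $(x+2y)y$, so it increases word length by one. Consequently a rooted tree map attached to a forest of degree $d$ raises the weight of a word by $d$ (not $d-1$): each vertex contributes one letter, whether it enters via $\fo$ or via $B_+$. Hence $\W$ sends $\FM_d$ into the weight-$(d+1)$ graded piece of $x\h y$, which has dimension $2^{(d+1)-2}=2^{d-1}$, exactly matching $\dim_\Q\FF_d=2^{d-1}$. There is no factor-of-two discrepancy, no collapse from $\FF_d$ to $\FM_d$ to be explained, and no need to hunt for linear relations among the tree maps of $\FF_d$ — indeed the paper's closing remark conjectures $\dim_\Q\HM_d=2^{d-1}$, i.e.\ that no such relations exist within $\FF_d$. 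The large middle portion of your proposal, devoted to ``identifying the exact relations that cut $\FM_d$ down to $2^{d-2}$,'' is therefore aimed at a non-existent problem and would fail.

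Once the grading is corrected, the right structure of the argument is: surjectivity of $\W$ on each graded piece plus the equality $\dim_\Q\FF_d=\dim_\Q(x\h y)_{d+1}$ forces $\W|_{\FM_d}$ to be an isomorphism (injectivity is free from the dimension count, since $\dim\FM_d\le\dim\FF_d$). Your surjectivity sketch points in the right direction (exploit the recursion $\FF_d=B_+(\FF_{d-1})+\fo\,\FF_{d-1}$), but it is not carried out, and the genuine difficulty is hidden there: $\fo$ is a derivation, so $\fo(f(x))$ is a sum of many monomials and the images of the two halves of the recursion are not in any obvious triangular position. The paper resolves this by reducing everything modulo $2$, encoding $\fo$ on the monomial basis by a matrix $A_{d-1}$ and the full recursion by a matrix $B_d$ with $f_d(x)\equiv B_d\w_{d+1}\pmod 2$, and proving $\det B_d\equiv 1\pmod 2$; unimodularity mod $2$ gives invertibility over $\Q$ and hence surjectivity. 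Some device of this kind is needed to make your spanning claim rigorous, and it is absent from the proposal.
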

Before we can prove Theorem \ref{thm:isom} at the end of this section,  we need to introduce some notation and prove some Lemma. Define the $2^{n-1}\times2^n$-matrix $A_n$ by $A_1 = (1 \,\, 1)$ and for $n\geq 2$ recursively by
\[A_n := \left(\begin{array}{c;{2pt/2pt}c;{2pt/2pt}c;{2pt/2pt}c} 
\multicolumn{2}{c;{2pt/2pt}}{A_{n-1}} & E_{2^{n-2}} & 0 \\ \hdashline[2pt/2pt]
0 & E_{2^{n-2}}  & \multicolumn{2}{c}{A_{n-1}} 
\end{array}
\right) =: \left( A^{(1)}_n \mid A^{(2)}_n \right)\,, \]
where $E_n$ denotes the $n \times n$ identity matrix. The matrices $A^{(j)}_n$ are both $2^{n-1} \times 2^{n-1}$ square matrices. Denote for $d\geq 2$ by $\w_d \in (\h^0_d)^{2^{d-2}}$ the vector of all $2^{d-2}$ monomials in $\h_0$ of degree $d$, ordered in lexicographical order ($x<y$) from the top to the bottom. For example we have for $d=2,3,4$
\[\w_2 = \begin{pmatrix}  xy \end{pmatrix} \,,\qquad \w_3 = \begin{pmatrix}  x^2 y \\ x y^2 \end{pmatrix}\,,\qquad \w_4 = \begin{pmatrix}  x^3 y \\ x^2 y^2 \\ xyxy \\ x y^3\end{pmatrix}\,.  \]
For a rooted forest $f\in \HH$ we denote by $f(\w_d) \in (\h^0)^{2^{d-2}}$ the component-wise evaluation of the rooted tree map $f$ on the entries of $\w_d$.

In the following we denote for $v,w \in \h$ by $R_v$ and $L_v$ the linear maps given by $R_v(w)=wv$ and $L_v(w) = vw$.
\begin{lemma}\label{lem:lem1}
For all $d\geq 2$ we have $(\w_d) \equiv A_{d-1} \w_{d+1} \mod{2}$.
\end{lemma}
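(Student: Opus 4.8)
The plan is to reduce coefficients modulo $2$ and to peel off the outer letters, turning the statement into a recursion that matches the one defining the matrices $A_n$. Modulo $2$ the rooted tree map $\fo=\partial_1$ is still a derivation, but now a \emph{symmetric} one: $\fo(x)\equiv\fo(y)\equiv xy\pmod 2$. Hence for a word $w=u_1\cdots u_d$ we have $\fo(w)\equiv\sum_{i=1}^{d}u_1\cdots u_{i-1}\,(xy)\,u_{i+1}\cdots u_d\pmod 2$, the sum of all words obtained from $w$ by replacing one of its letters by the string $xy$; in particular every term is again admissible whenever $w$ is. Writing an admissible monomial of degree $d$ as $xmy$ with $m$ an arbitrary word of degree $d-2$, the derivation property gives
\[
\fo(xmy)\;\equiv\;x\bigl(ym+mx+\fo(m)\bigr)y\pmod 2.
\]

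This motivates introducing, for $n\ge 0$, the $\mathbb F_2$-linear operator $\phi$ on $\mathbb F_2\langle x,y\rangle$ defined on a word $m$ by $\phi(m)=ym+mx+\fo(m)$, together with the column vector $\mathbf m_n$ of all $2^n$ words of degree $n$ in lexicographic order ($x<y$). Since the admissible monomials of degree $d$ are precisely the words $xmy$ with $m$ of degree $d-2$, and lexicographic order on the words $xmy$ coincides with lexicographic order on the words $m$, we get $\w_d=L_xR_y\,\mathbf m_{d-2}$ and $\w_{d+1}=L_xR_y\,\mathbf m_{d-1}$ entrywise. Together with the displayed identity this reads $\fo(\w_d)\equiv L_xR_y\,\phi(\mathbf m_{d-2})\pmod 2$, and since $L_xR_y$ commutes with forming $\mathbb F_2$-linear combinations, the assertion of the lemma is equivalent to the family of identities
\[
\phi(\mathbf m_n)\;\equiv\;A_{n+1}\,\mathbf m_{n+1}\pmod 2\qquad(n\ge 0).
\]

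I would prove these by induction on $n$. The base case $n=0$ is immediate: $\mathbf m_0=(1)$ has the single entry $1$, so $\phi(\mathbf m_0)=x+y$, while $A_1=(1 \,\, 1)$ and $\mathbf m_1$ has entries $x$ and $y$, so $A_1\mathbf m_1=x+y$ as well. For the inductive step, the key computation --- and the only genuinely nontrivial point --- is the pair of identities, valid modulo $2$ for every word $m$,
\[
\phi(xm)\;\equiv\;x\,\phi(m)+yxm,\qquad\qquad \phi(ym)\;\equiv\;y\,\phi(m)+xym,
\]
which follow directly from the derivation property: for instance $\phi(xm)=y(xm)+(xm)x+\fo(x)\cdot m+x\cdot\fo(m)=yxm+\bigl(xym+xmx+x\cdot\fo(m)\bigr)=yxm+x\,\phi(m)$, and symmetrically for $\phi(ym)$ using $\fo(y)\equiv xy$. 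Splitting $\mathbf m_n$ according to the first letter into the two halves $x\mathbf m_{n-1}$ and $y\mathbf m_{n-1}$, and splitting $\mathbf m_{n+1}$ likewise into the four length-$2^{n-1}$ blocks $xx\mathbf m_{n-1}$, $xy\mathbf m_{n-1}$, $yx\mathbf m_{n-1}$, $yy\mathbf m_{n-1}$, the inductive hypothesis $\phi(\mathbf m_{n-1})\equiv A_n^{(1)}(x\mathbf m_{n-1})+A_n^{(2)}(y\mathbf m_{n-1})$ gives, after prepending $x$ resp. $y$ and adding the correction terms $yx\mathbf m_{n-1}$ resp. $xy\mathbf m_{n-1}$ from the two displayed identities,
\[
\phi(\mathbf m_n)\;\equiv\;\begin{pmatrix} A_n^{(1)} & A_n^{(2)} & E_{2^{n-1}} & 0\\ 0 & E_{2^{n-1}} & A_n^{(1)} & A_n^{(2)}\end{pmatrix}\mathbf m_{n+1}\pmod 2,
\]
and the block matrix on the right is exactly $A_{n+1}$ by the defining recursion of $A_n$ (using $A_n=(A_n^{(1)}\mid A_n^{(2)})$). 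This closes the induction, and unwinding $\w_d=L_xR_y\,\mathbf m_{d-2}$ recovers the claimed congruence.

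The main obstacle is spotting this reformulation. Once one works modulo $2$ (so the sign of $\fo(y)$ becomes harmless and $\fo$ is symmetric in $x$ and $y$), strips the outer $x$ and $y$ to pass from admissible monomials to arbitrary words, and introduces the operator $\phi$, the match with the block recursion defining $A_n$ is essentially forced: the two correction terms $yxm$ and $xym$ are precisely what account for the two identity blocks $E_{2^{n-1}}$ appearing in $A_{n+1}$.
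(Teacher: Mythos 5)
Your proof is correct and follows essentially the same route as the paper's: an induction on the degree that uses the fact that $\fo$ is a derivation with $\fo(x)\equiv\fo(y)\equiv xy \pmod 2$, splits the vector of monomials according to the second letter, and matches the outcome against the block recursion defining $A_n$. Your reformulation --- stripping the outer $x$ and $y$ and packaging the boundary terms into the operator $\phi(m)=ym+mx+\fo(m)$ --- is a clean way of organizing the same computation that the paper carries out inside $x\h y$ via $L_x$, $L_{xy}L_x^{-1}$ and $\w'_{d-1}=L_x^{-1}(\w_{d-1})$, and it makes the origin of the two identity blocks in $A_{n+1}$ especially transparent.
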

\begin{proof} We prove this statement by induction on $d$. For $d=2$ this follows immediately, since $(\w_2)=((xy))=(xyy-xxy)$ and $A_1 w_3 = (xxy + xyy)$.
By definition of $\w_d$ one can check that 
\begin{equation}\label{eq:wd}
\w_d = \begin{pmatrix}
L_x(\w_{d-1})\\
L_{xy} L_x^{-1}(\w_{d-1})
\end{pmatrix} = \begin{pmatrix}
L_{x^2}(x \w'_{d-2}) \\
L_{x^2}(y \w'_{d-2}) \\
L_{xy}(x \w'_{d-2}) \\
L_{xy}(y \w'_{d-2}) 
\end{pmatrix}\,,
\end{equation}
where we write $\w'_{d-2} := L_{x}^{-1}(\w_{d-2})$.
Using that $$ is a derivation with $(x)=xy$ and $(y)=-xy$ yields
\begin{align}\label{eq:lem1_1}\begin{split}
(L_x(\w_{d-1})) &= (x^2 w'_{d-1}) = (x) x w'_{d-1} + x \,(x) \w'_{d-1} + x^2 (w'_{d-1})\\
&=L_{xy}(w_{d-1}) + L_{x^2}(L_y + )(\w'_{d-1})
\end{split}
\end{align}
and 
\begin{align}\label{eq:lem1_2} \begin{split}
(L_{xy} L_x^{-1}(\w_{d-1})) &= (xy \w'_{d-1}) = (x) y \w'_{d-1} + x\, (y) \w'_{d-1}+xy \,(\w'_{d-1})\\
&\equiv L_{xy}(L_y + )(\w'_{d-1}) + L_{x^2}(y \w'_{d-1})  \mod{2}\,.
\end{split}
\end{align}
Now by assumption we have
\[A_{d-2} \w_d \equiv (\w_{d-1}) = (x \w'_{d-1}) = L_{xy}(\w'_{d-1}) + L_x \,(\w'_{d-1})  \mod{2}\,. \]
This together with \eqref{eq:lem1_1} and \eqref{eq:lem1_2} yields
\begin{align*}
(L_x(\w_{d-1})) &\equiv A_{d-2}(L_x(\w_d)) + L_{xy}(\w_{d-1}) \mod{2} \,,\\
(L_{xy}L_x^{-1}(\w_{d-1})) &\equiv A_{d-2}(L_{xy} L_x^{-1}(\w_d))+L_{x^2} L_y(\w'_{d-1}) \mod{2}\,.
\end{align*}
This concludes the statement in the Lemma, since by definition of $A_{d-1}$ and \eqref{eq:wd} the right-hand side gives the entries of $A_{d-1}\w_{d+1}$ .
\end{proof}
Define for $d\geq 1$ the $2^{d} \times 2^{d-1}$-matrices
\[\E^{(1)}_{d} = {\tiny\begin{pmatrix}
1 & 0  & \dots & 0\\[-6pt]
0 & 0  &  & \vdots \\[-6pt]
0 & 1  &  & \vdots \\[-6pt]
0 & 0  &  & \vdots \\[-6pt]
\vdots& &\ddots & \vdots \\[-6pt]
\vdots& & & 1\\[-3pt]
0 &\dots  &\dots & 0
\end{pmatrix}}  \,,\qquad \E^{(2)}_{d} =  {\tiny\begin{pmatrix}
0 & 0  & \dots & 0\\[-6pt]
1 & 0  &  & \vdots \\[-6pt]
0 & 0  &  & \vdots \\[-6pt]
0 & 1  &  & \vdots \\[-6pt]
\vdots& &\ddots & \vdots \\[-6pt]
\vdots& & & 0\\[-3pt]
0 &\dots  &\dots & 1
\end{pmatrix}}\,. \]

\begin{lemma}\label{lem:lem3}
We have for all $d\geq 2$
\begin{equation}
\w_{d+1} = \E^{(2)}_{d-1} R_y \w_d + \E^{(1)}_{d-1} R_{xy} R_{y}^{-1} \w_d \,.
\end{equation}
\end{lemma}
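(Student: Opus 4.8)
The plan is to prove this by induction on $d$, mimicking the recursive structure already used in Lemma \ref{lem:lem1} and in the defining recursion \eqref{eq:wd} for $\w_d$. The base case $d=2$ amounts to checking that $\w_3 = \begin{pmatrix} x^2y \\ xy^2\end{pmatrix}$ equals $\E^{(2)}_1 R_y \w_2 + \E^{(1)}_1 R_{xy}R_y^{-1}\w_2$; since $\w_2 = (xy)$, we have $R_y\w_2 = (xy^2)$ and $R_{xy}R_y^{-1}\w_2 = (x\cdot xy) = (x^2y)$, and with $\E^{(1)}_1 = \binom{1}{0}$, $\E^{(2)}_1 = \binom{0}{1}$ this gives exactly $\binom{x^2y}{xy^2}$, as needed.

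For the inductive step I would start from \eqref{eq:wd} written one level down, namely $\w_d = \begin{pmatrix} L_x \w_{d-1} \\ L_{xy}L_x^{-1}\w_{d-1}\end{pmatrix}$, and analyze $R_y\w_d$ and $R_{xy}R_y^{-1}\w_d$ block by block. The key observation is that right-multiplication operators commute with the left-multiplication operators $L_x$, $L_{xy}$, $L_x^{-1}$ appearing in \eqref{eq:wd}, so $R_y \w_d = \begin{pmatrix} L_x(R_y\w_{d-1}) \\ L_{xy}L_x^{-1}(R_y\w_{d-1})\end{pmatrix}$ and similarly for $R_{xy}R_y^{-1}\w_d$. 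Applying the induction hypothesis to rewrite $R_y\w_{d-1}$ and $R_{xy}R_y^{-1}\w_{d-1}$ in terms of $\w_d$, and then re-expanding each resulting block through \eqref{eq:wd} once more, one obtains $\w_{d+1}$ expressed as a sum of two terms, each of which is a block matrix built from $\E^{(1)}_{d-2}$, $\E^{(2)}_{d-2}$ padded appropriately. It then remains to identify these padded block matrices with $\E^{(1)}_{d-1}$ and $\E^{(2)}_{d-1}$, which is exactly the combinatorial content of how the $\E$-matrices are defined (the single leading/trailing zero row and the staggered identity pattern are precisely what results from nesting the degree-$(d-2)$ versions inside the $L_x$/$L_{xy}$ split).

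The main obstacle is purely bookkeeping: one has to track carefully how the rows of $\w_{d+1}$ in lexicographical order correspond to the four sub-blocks $L_{x^2}(x\w'_{d-2})$, $L_{x^2}(y\w'_{d-2})$, $L_{xy}(x\w'_{d-2})$, $L_{xy}(y\w'_{d-2})$ of \eqref{eq:wd}, and to verify that the row at which each padded identity block starts matches the definition of $\E^{(1)}_{d-1}$ and $\E^{(2)}_{d-1}$. Once the indexing conventions are fixed, the verification is a routine comparison of block decompositions, with no analytic input; the only place one must be attentive is the interaction between the operator $R_{xy}R_y^{-1} = R_{xy}R_y^{-1}$ (sending a word ending in $y$ to the same word with $y$ replaced by $xy$, i.e. appending $x$ before the final $y$) and the lexicographic ordering of admissible words, but this is exactly the same manipulation that underlies \eqref{eq:wd} itself.
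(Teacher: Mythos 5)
Your proof is correct, but it takes a genuinely different route from the paper. The paper disposes of this lemma with a single direct observation about suffixes: every admissible monomial of degree $d+1$ ends in either $xy$ or $y^2$, so it arises from a unique degree-$d$ admissible monomial $m^{(d)}_j$ either as $R_{xy}R_y^{-1}m^{(d)}_j$ or as $R_y m^{(d)}_j$; since lexicographic order ($x<y$) is determined before the last letter, the two children of $m^{(d)}_j$ land exactly at positions $2j-1$ and $2j$ of $\w_{d+1}$, with the $\cdots xy$ child first, and this is precisely what the placement matrices $\E^{(1)}_{d-1}$ (odd rows) and $\E^{(2)}_{d-1}$ (even rows) encode. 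You instead run an induction through the prefix recursion \eqref{eq:wd}, using that right multiplications commute with $L_x$ and $L_{xy}L_x^{-1}$ and that the interleaving matrices nest as $\E^{(i)}_{d-1}=\left(\begin{smallmatrix}\E^{(i)}_{d-2} & 0\\ 0 & \E^{(i)}_{d-2}\end{smallmatrix}\right)$; with that identity the step closes cleanly, since the right-hand side becomes $\bigl(L_x(\text{IH}),\, L_{xy}L_x^{-1}(\text{IH})\bigr)^t=\bigl(L_x\w_d,\,L_{xy}L_x^{-1}\w_d\bigr)^t=\w_{d+1}$. Your approach costs more bookkeeping (and you should state the block-nesting identity for the $\E$-matrices explicitly rather than leave it as "padding"), but it has the minor virtue of deriving the lemma formally from \eqref{eq:wd} rather than re-examining the lexicographic order from scratch; the paper's argument is shorter and conceptually cleaner because the lemma is intrinsically a statement about suffixes, not prefixes.
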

\begin{proof}
This follows from the fact that if one orders the monomials $m^{(d)}_\ast$ in $x\h y$ of degree $d$ lexicographically by $m^{(d)}_1 < m^{(d)}_2 < \dots < m^{(d)}_{2^{d-2}}$, then $m^{(d+1)}_{2j-1} = R_{xy} R_{y^{-1}} m^{(d)}_j$ and $m^{(d+1)}_{2j}= R_y m^{(d)}_j$ for $j=1,\dots,2^{d-2}$.
\end{proof}

Define on $\h$ the anti-automorphism $\tau$ by $\tau(x)=y$ and $\tau(y)=x$. Clearly we have $\tau(x\h y) = x \h y$ which we can use to define for $d\geq 1$ the unique permutation matrix $T_d \in M_{2^{d-2}}(\Z)$ satisfying
\[ T_d \w_d = \tau(\w_d) \,. \] 

\begin{lemma}\label{lem:lem4}
We have for all $d\geq 2$
\begin{equation}\label{eq:lem4}
\det\left( A_{d-1} \E_{d-1}^{(2)} \right) \equiv 1 \mod{2}\,.
\end{equation}
\end{lemma}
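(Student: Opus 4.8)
The plan is to induct on $d$. The base case $d=2$ is immediate since $A_1\E^{(2)}_1=(1)$, so assume $d\geq 3$.

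First I would describe $B:=A_{d-1}\E^{(2)}_{d-1}$ concretely. Since the $j$-th column of $\E^{(2)}_{d-1}$ is the $2j$-th standard basis vector, the $j$-th column of $B$ equals the $2j$-th column of $A_{d-1}$, so $B$ is exactly the submatrix of even-indexed columns of $A_{d-1}$. By Lemma~\ref{lem:lem1} the entry $(A_{d-1})_{ij}$ is, modulo $2$, the coefficient of the $j$-th entry of $\w_{d+1}$ in $\fo$ applied to the $i$-th entry of $\w_d$, and by Lemma~\ref{lem:lem3} the even-indexed entries of $\w_{d+1}$ are precisely the words $wy$ as $w$ runs over the entries of $\w_d$, i.e.\ over all degree-$d$ admissible words. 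Hence $B$ may be viewed as a square matrix with rows and columns both indexed by degree-$d$ admissible words, the column attached to $w'$ being the word $w'y$, and
\[ B_{w,w'} \equiv [\,w'y\,]\,\fo(w) \pmod{2}, \]
where $[\,u\,]\,f$ denotes the coefficient of the monomial $u$ in $f\in\h$.

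Next I would use that $\fo=\partial_1$ is a derivation with $\fo(x)=xy$ and $\fo(y)=-xy$, so that modulo $2$ one has $\fo(u_1\cdots u_d)\equiv\sum_{k=1}^{d}u_1\cdots u_{k-1}(xy)\,u_{k+1}\cdots u_d$ for letters $u_i\in\{x,y\}$. Two short observations then drive the reduction. (i) If a degree-$d$ admissible word $w=u_1\cdots u_d$ ends in $xy$ (so $u_{d-1}=x$ and $u_d=y$), then among these summands only the one replacing $u_{d-1}$ ends in $yy$, and it is $wy$; hence the $w$-row of $B$ is the standard basis vector of the column labelled $w$. (ii) If $w=vy$ with $v$ a degree-$(d-1)$ admissible word, then $\fo(vy)\equiv\fo(v)\,y+v\,xy\pmod{2}$, and since every monomial of $\fo(v)$ ends in $y$ (because $v$ does), the monomials of $\fo(vy)$ ending in $yy$ are exactly those of $\fo(v)\,y$, whereas $v\,xy$ ends in $xy$; consequently $B_{vy,\,v'y}\equiv[\,v'y\,]\,\fo(v)\equiv\big(A_{d-2}\E^{(2)}_{d-2}\big)_{v,v'}\pmod{2}$, using the same description one degree lower.

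Finally I would put these together. Ordering the degree-$d$ admissible words so that those ending in $xy$ come first and those ending in $yy$ --- which are exactly the words $vy$ with $v$ a degree-$(d-1)$ admissible word --- come last, observation (i) makes $B\bmod 2$ block lower-triangular with an identity block in the top-left corner, and observation (ii) identifies the complementary diagonal block with $A_{d-2}\E^{(2)}_{d-2}\bmod 2$ under the relabelling $vy\leftrightarrow v$. Hence $\det B\equiv\pm\det\!\big(A_{d-2}\E^{(2)}_{d-2}\big)\pmod{2}$, which is $\equiv1$ by the induction hypothesis. I expect the only points needing real care to be the bookkeeping of the identifications between columns of $A_{d-1}$, entries of $\w_{d+1}$, and degree-$d$ admissible words, together with the two monomial computations in (i) and (ii) --- in particular pinning down exactly which insertions of $xy$ into $w$ leave a word ending in $yy$ --- but these are routine finite checks and present no genuine obstacle.
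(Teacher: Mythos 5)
Your proof is correct, but it takes a genuinely different route from the paper. The paper does not induct on $d$: it exploits the anti-automorphism $\tau$, showing that $\fo$ and $\tau$ commute modulo $2$ and deducing the matrix identity $A_{d-1}\E^{(2)}_{d-1}\equiv T_d\,A^{(1)}_{d-1}\,T_d \pmod 2$, where $T_d$ is the permutation matrix with $T_d\w_d=\tau(\w_d)$; the lemma then follows because $A^{(1)}_{d-1}$ is by its recursive construction upper triangular with $1$'s on the diagonal and $\det(T_d)^2=1$. You instead read off from Lemma \ref{lem:lem1} and Lemma \ref{lem:lem3} the combinatorial meaning of the entries of $A_{d-1}\E^{(2)}_{d-1}$ as coefficients $[\,w'y\,]\,\fo(w)$ over admissible words, and then split according to whether $w$ ends in $xy$ or in $yy$ to obtain a block lower-triangular shape with an identity block and a copy of $A_{d-2}\E^{(2)}_{d-2}$, closing the induction. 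Both arguments ultimately rest on sorting monomials by their last two letters (the paper does this when it separates the $R_y\w_d$ and $R_{xy}R_y^{-1}\w_d$ parts of $\w_{d+1}$), but yours is more elementary and self-contained: it uses only the derivation property of $\fo$ and the interpretation of $A_{d-1}$ as a coefficient matrix, and avoids both the $\tau$-commutation computation and any appeal to the explicit recursive block structure of $A_n$. What the paper's route buys in exchange is the extra structural identity $A_{d-1}\E^{(2)}_{d-1}\equiv T_dA^{(1)}_{d-1}T_d$, i.e.\ a duality between the even-column minor and the left half of $A_{d-1}$. Your bookkeeping is sound: the top-right block vanishes because the unique $yy$-tailed monomial of $\fo(w)$ for $w$ ending in $xy$ is $wy$ itself, and the identification of the lower-right block with $A_{d-2}\E^{(2)}_{d-2}$ is a conjugation by the same permutation on rows and columns, so the determinant is unaffected modulo $2$.
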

\begin{proof}
By Lemma \ref{lem:lem1} and \ref{lem:lem3} we have
\begin{align}\label{eq:lem4eq1}\begin{split}
(\w_d) &\equiv A_{d-1} \w_{d+1} \mod{2}\\
&= A_{d-1}\E^{(2)}_{d-1} R_y (\w_d) +  A_{d-1}\E^{(1)}_{d-1} R_{xy} R_{y}^{-1} (\w_d) \,.
\end{split}
\end{align}
Since for all $w \in \h$ we have $(\tau(wy)) = xy \tau(w) + x\,(\tau(w))$ and $\tau((wy))= x \tau((w))-xy(\tau(w)) $ the operators $$ and $\tau$ commute modulo $2$. Together with the definition of the matrix $T_d$, Lemma \ref{lem:lem1}, $\tau L_x = R_y \tau$ and $\tau L_{xy} = R_{xy}\tau$ we get
\begin{align*}
T_d \,(\w_d) &= (\tau(\w_d)) \equiv \tau((\w_d)) \equiv \tau( A_{d-1} w_{d+1}) \\
&=\tau\left(A^{(1)}_{d-1}L_x (\w_d) + A^{(2)}_{d-1} L_{xy} L^{-1}_{x}(\w_d)\right)\\
&= A^{(1)}_{d-1} T_d R_y(\w_d) + A^{(2)}_{d-1} T_d R_{xy} R_y^{-1}(\w_d) \mod{2}\,.
\end{align*}
Since the entries in the first summand of this expression end all in $y^2$ and the entries of the second expression end all in $xy$, we get by \eqref{eq:lem4eq1} the identities  $A_{d-1} \E_{d-1}^{(2)} \equiv T_d A^{(1)}_{d-1} T_d \mod{2}$ and $A_{d-1} \E_{d-1}^{(1)} \equiv T_d A^{(2)}_{d-1} T_d \mod{2}$. Since $A^{(1)}_{d-1}$ is an upper triangle matrix with $1$ on the diagonal we have $\det(A^{(1)}_{d-1})=1$ which together with  $\det(T_d)^2 = 1$ gives \eqref{eq:lem4}.
\end{proof}


Now define for $d\geq 1$ the matrices $B_d \in M_{2^{d-1}}(\Z)$ by $B_1 =(1)$ and for $d \geq 2$ by
\[ B_d = \left(
\begin{array}{c} 
B_{d-1} (\E_{d-1}^{(1)})^t\\ \hdashline[2pt/2pt]
B_{d-1} A_{d-1}
\end{array}\,
\right) \,.\]
Notice that $B_{d-1} (\E_{d-1}^{(1)})^t$ is just $B_{d-1}$ with zero columns added at the even places, i.e. if $B_{d-1} = (b_1, \dots,b_{2^{d-2}})$ then $B_{d-1} (\E_{d-1}^{(2)})^t = (b_1,0,b_2,0,\dots,b_{2^{d-2}},0)$. 

\begin{lemma}\label{lem:lem2}
For all $d\geq 1$ we have $\det\left( B_d \right) \equiv 1 \mod{2}$.
\end{lemma}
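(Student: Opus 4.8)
The plan is to prove $\det(B_d)\equiv 1 \bmod 2$ by induction on $d$, exactly as in the preceding lemmas, using the block structure of $B_d$ together with Lemma \ref{lem:lem4}. For $d=1$ we have $B_1=(1)$ and there is nothing to prove, so assume $d\geq 2$ and that $\det(B_{d-1})\equiv 1 \bmod 2$.

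First I would work modulo $2$ throughout and use elementary row operations that do not change the determinant mod $2$. Write $B_d$ in its defining block form with top block $B_{d-1}(\E_{d-1}^{(1)})^t$ (i.e.\ $B_{d-1}$ with zero columns inserted at the even positions) and bottom block $B_{d-1}A_{d-1}$. Recall $A_{d-1}=\bigl(A^{(1)}_{d-1}\mid A^{(2)}_{d-1}\bigr)$ and that the columns of $B_{d-1}(\E_{d-1}^{(1)})^t$ occupy exactly the odd-indexed columns; the even-indexed columns are zero there. Expanding the determinant along those even columns (or permuting columns to gather the odd ones on the left), one sees
\[
\det(B_d) \equiv \pm \det\!\bigl(B_{d-1}\bigr)\cdot \det\!\bigl( B_{d-1} A_{d-1}\,\E^{(2)}_{d-1}\bigr) \bmod 2,
\]
where $\E^{(2)}_{d-1}$ picks out the even columns of the bottom block $B_{d-1}A_{d-1}$. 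Indeed, after moving the odd columns of the top block into an identity-like left block (using the induction hypothesis $\det B_{d-1}$ is odd to clear it), what remains in the complementary minor is precisely the even-column part of $B_{d-1}A_{d-1}$, which is $B_{d-1}A_{d-1}\E^{(2)}_{d-1}$.

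Next I would use $\det(B_{d-1})\equiv 1$ (induction) to reduce to showing $\det\!\bigl(B_{d-1}A_{d-1}\E^{(2)}_{d-1}\bigr)\equiv 1 \bmod 2$, and since $B_{d-1}$ is invertible mod $2$ by induction this is equivalent to $\det\!\bigl(A_{d-1}\E^{(2)}_{d-1}\bigr)\equiv 1 \bmod 2$ --- but wait, the index shift: here the relevant matrix is $A_{d-1}\E^{(2)}_{d-1}$, which is exactly the matrix appearing in Lemma \ref{lem:lem4} (with its $d$ replaced by our $d$). So the bottom-right minor has odd determinant by Lemma \ref{lem:lem4}, and combining the two odd factors with the sign gives $\det(B_d)\equiv 1 \bmod 2$. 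I would double-check the sign $\pm$ coming from the column permutation is irrelevant since we are mod $2$ anyway.

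The main obstacle I expect is bookkeeping the block/permutation structure cleanly: verifying that expanding along the zero columns of $B_{d-1}(\E_{d-1}^{(1)})^t$ really does leave the complementary minor equal to $B_{d-1}A_{d-1}\E^{(2)}_{d-1}$ (i.e.\ that the ``even columns of the bottom block'' are selected consistently with how $\E^{(2)}_{d-1}$ was defined), and matching the index $d-1$ in $A_{d-1}\E^{(2)}_{d-1}$ against the statement of Lemma \ref{lem:lem4}. Once the dimensions and column-index conventions are lined up, the determinant multiplicativity plus the two cited facts ($\det A^{(1)}$-type triangularity inside Lemma \ref{lem:lem4}, and the induction hypothesis) finish it immediately.
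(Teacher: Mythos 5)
Your proof is correct and follows essentially the same route as the paper: the paper implements your column permutation by multiplying $B_d$ on the right by the permutation matrix $\E_{d-1}=(\E^{(1)}_{d-1}\mid\E^{(2)}_{d-1})$, obtaining a block lower-triangular matrix with diagonal blocks $B_{d-1}$ and $B_{d-1}A_{d-1}\E^{(2)}_{d-1}$, and then concludes by induction together with Lemma \ref{lem:lem4} exactly as you do.
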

\begin{proof}
With the matrix $\E_d := (\E^{(1)}_d \mid \E^{(2)}_d) \in M_{2^{d-1}}(\Z)$  one checks that 
\[ B_d  \E_d = \left(\begin{array}{c;{2pt/2pt}c} 
B_{d-1} & 0 \\ \hdashline[2pt/2pt] 
B_{d-1} A_{d-1} \E^{(1)}_{d-1} & B_{d-1} A_{d-1} \E^{(2)}_{d-1}
\end{array}
\right) \,.\]
The result now follows inductively together with Lemma \ref{lem:lem4}.
\end{proof}

Before we can finally prove Theorem \ref{thm:isom} we will now give the connection of the map $\W: \FF \rightarrow x \h y$ given by $f \mapsto f(x)$ and the matrix $B_d$. For this we define the vector $\f_d \in \FF_d^{2^{d-1}}$ by $f_1 = ()$ and 
\[  \f_d = \left(
\begin{array}{c} 
B_{+}(\f_{d-1})\\\hdashline[2pt/2pt] 
 \, \f_{d-1}
\end{array}\,
\right) \,.\]
So in particular $\f_d$ contains all generators of $\FF_d$. By $\f_d(x)\in (x \h y)^{2^{d-1}}$ we will denote the vectors obtained by evaluating all entries at $x$.

\begin{lemma}\label{lem:prop1}
For all $d\geq 1$ we have $f_d(x) \equiv B_d \w_{d+1} \mod{2}$.
\end{lemma}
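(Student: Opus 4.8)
The plan is to prove this by induction on $d$, mirroring the recursive definitions of $\f_d$ and $B_d$ and using the recursion $\w_{d+1}$ from Lemma \ref{lem:lem3}. The base case $d=1$ is immediate: $\f_1(x) = (\fo(x)) = (xy)$ and $B_1 \w_2 = (1)(xy) = (xy)$. For the inductive step I would split $\f_d$ into its two blocks. The lower block $\fo\,\f_{d-1}$ evaluated at $x$ is $\fo(\f_{d-1}(x))$, and by the induction hypothesis $\f_{d-1}(x) \equiv B_{d-1}\w_d \bmod 2$, so this block is $\fo(B_{d-1}\w_d) = B_{d-1}\,\fo(\w_d) \equiv B_{d-1} A_{d-1}\w_{d+1} \bmod 2$ by Lemma \ref{lem:lem1} (using linearity of $\fo$ and that the entries of $B_{d-1}$ are integers, so reduction mod $2$ commutes). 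This matches the lower block $B_{d-1}A_{d-1}$ of the definition of $B_d$ applied to $\w_{d+1}$.

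For the upper block I need $B_+(\f_{d-1})(x) \equiv B_{d-1}(\E_{d-1}^{(1)})^t\,\w_{d+1} \bmod 2$. Here the key is Definition \ref{def:rtm}(ii): for a tree $t = B_+(g)$ one has $t(u) = R_y R_{x+2y} R_y^{-1}\,g(u)$, and modulo $2$ this reduces to $R_y R_x R_y^{-1} = R_{xy}R_y^{-1}$ (since $R_{x+2y} \equiv R_x \bmod 2$ and these right-multiplication operators commute appropriately). Applying this componentwise to the vector $\f_{d-1}$, I get $B_+(\f_{d-1})(x) \equiv R_{xy}R_y^{-1}\,\f_{d-1}(x) \equiv R_{xy}R_y^{-1}\,B_{d-1}\w_d = B_{d-1}\,R_{xy}R_y^{-1}\w_d \bmod 2$ (again the integer matrix $B_{d-1}$ commutes past the right-multiplication operators). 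Now Lemma \ref{lem:lem3} gives $\w_{d+1} = \E_{d-1}^{(2)}R_y\w_d + \E_{d-1}^{(1)}R_{xy}R_y^{-1}\w_d$; since the entries of $\E_{d-1}^{(2)}R_y\w_d$ all end in $y^2$ while those of $\E_{d-1}^{(1)}R_{xy}R_y^{-1}\w_d$ all end in $xy$, the operator $(\E_{d-1}^{(1)})^t$, which picks out the "ending-in-$xy$" coordinates of $\w_{d+1}$ (the odd-indexed monomials $m^{(d+1)}_{2j-1} = R_{xy}R_y^{-1}m^{(d)}_j$ from the proof of Lemma \ref{lem:lem3}), satisfies $(\E_{d-1}^{(1)})^t\w_{d+1} = R_{xy}R_y^{-1}\w_d$. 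Hence $B_{d-1}R_{xy}R_y^{-1}\w_d = B_{d-1}(\E_{d-1}^{(1)})^t\w_{d+1}$, which is exactly the upper block.

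Stacking the two blocks gives $\f_d(x) \equiv B_d\,\w_{d+1}\bmod 2$, completing the induction.

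The main obstacle I anticipate is the careful bookkeeping in the upper block: one must justify (a) that $B_+$ acts componentwise on the vector $\f_{d-1}$ compatibly with the matrix description, (b) that modulo $2$ the operator $R_yR_{x+2y}R_y^{-1}$ collapses to $R_{xy}R_y^{-1}$ — which requires checking that $R_y^{-1}$ is well-defined on the relevant words (all entries of $\f_{d-1}(x)$ lie in $x\h y$, hence end in $y$, so $R_y^{-1}$ makes sense) — and (c) the precise identification of $(\E_{d-1}^{(1)})^t\w_{d+1}$ with $R_{xy}R_y^{-1}\w_d$, which is essentially a re-reading of Lemma \ref{lem:lem3} but must be stated cleanly. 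Everything else is routine linearity and the two lemmas already proved.
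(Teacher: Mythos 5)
Your proposal is correct and follows essentially the same route as the paper: induction on $d$, splitting $\f_d$ into its two blocks, handling the lower block via Lemma \ref{lem:lem1} and the induction hypothesis, and the upper block via the mod-$2$ reduction of $R_yR_{x+2y}R_y^{-1}$ to $R_yR_xR_y^{-1}=R_{xy}R_y^{-1}$ together with the identification $B_{d-1}R_{xy}R_y^{-1}(\w_d)=B_{d-1}(\E_{d-1}^{(1)})^t\w_{d+1}$. Your write-up is in fact more explicit than the paper's about why that last identification holds.
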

\begin{proof}
We will again use induction on $d$. For $d=1$ the statement can be checked easily. Now for $d\geq 2$ we get by assumption, the definition of rooted tree maps and Lemma \ref{lem:lem1}
\begin{align*}
\f_d(x) &= \left(
\begin{array}{c} 
B_{+}(\f_{d-1})(x)\\\hdashline[2pt/2pt] 
 \, \f_{d-1}(x)
\end{array}\,
\right) \equiv \left(\begin{array}{c} 
R_y R_x R_{y}^{-1}\f_{d-1}(x)\\\hdashline[2pt/2pt] 
 \, \f_{d-1}(x)
\end{array}\,
\right) \\
&\equiv \left(\begin{array}{c} 
B_{d-1} R_y R_x R_{y}^{-1} \w_d \\\hdashline[2pt/2pt] 
B_{d-1}  \,(\w_d)
\end{array}\,
\right)  \equiv \left(\begin{array}{c} 
B_{d-1} R_y R_x R_{y}^{-1}(\w_d) \\\hdashline[2pt/2pt] 
B_{d-1}  A_{d-1} \w_d
\end{array}\,
\right) \mod{2} \,.
\end{align*}
Together with $B_{d-1} R_y R_x R_{y}^{-1}(\w_d) = B_{d-1} (\E_{d-1}^{(1)})^t \w_{d+1}$, which follows from the lexicographical ordering of the $\w_d$, we obtain the desired result. 
\end{proof}

\begin{proof}[Proof of Theorem \ref{thm:isom}] 
That $\W$ is an isomorphism follows now directly from Lemma \ref{lem:lem1} and \ref{lem:prop1}, which imply that every monomial in $x\h y$ of degree $d+1$ can be written as some $f(x)$ with $f\in \FF_d$. Therefore the map $\W$ is surjective on the degree graded parts $\FF_d$ and $(x\h y)_{d+1}$, which both have dimension $2^{d-1}$.
\end{proof}
\section{Kawashima relations}
In this section we want to give the proof of Theorem \ref{thm:main10}, i.e. that the rooted tree map relations imply the linear part of the Kawashima relation. Recall that with the automorphism $\varphi \in \Aut(\h)$ defined by  $\varphi(x) =  x+y$ and $\varphi(y) = -y$ the linear part of the Kawashima relations, i.e. the $m=1$ case in Theorem \ref{thm:kawashimarel}, states that
\begin{equation}
\varphi(v \ast w) \circledast y = L_x \varphi(v \ast w) \in \ker Z
\end{equation}
for any $w,v \in \h y$. In \cite{T} it was shown, that the rooted tree maps relations follow from the linear part of the Kawashima relations (see proof of \cite[Corollary 4.7]{T}), i.e. we have in particular
\[ \{ f(w) \mid f\in \FM,\, w \in \h^0 \} \subset  L_x \varphi(\h y \ast \h y) \,.  \]
We will now show that also the opposite inclusion holds.
\begin{theorem}\label{thm:main1}
We have
\[L_x \varphi(\h y \ast \h y)\subset \{ f(w) \mid f\in \FM,\, w \in \h^0 \} \,.\]
\end{theorem}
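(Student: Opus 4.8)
The plan is to produce, for any $v, w \in \h y$, an explicit rooted tree map $f \in \FM$ and an admissible word $u \in \h^0$ such that $f(u) = L_x \varphi(v \ast w)$. The starting observation is that $v \ast w \in \h^1$, so $\varphi(v \ast w)$ lies in $\varphi(\h^1)$; since $\varphi(y) = -y$ and $\varphi$ is a concatenation automorphism, $\varphi(\h y) = \h y$, hence $\varphi(v \ast w) \in \h y$ and $L_x \varphi(v \ast w) \in x \h y$. Thus the target always sits in $x\h y$, which is exactly the image of the isomorphism $\W \colon \FM \to x\h y$ from Theorem \ref{thm:isom}. So for the special choice $u = x$ there is, for \emph{every} target in $x\h y$, a unique $f \in \FM$ with $f(x) = L_x\varphi(v\ast w)$; in particular the inclusion of the theorem holds already with $w = x$ on the right-hand side. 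The real content is therefore not existence but making the bookkeeping match: one must check that the span $\{f(w) \mid f \in \FM, w \in \h^0\}$ is no smaller than $x\h y$ and no larger than $L_x\varphi(\h y \ast \h y)$.

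First I would pin down the two inclusions separately. For $\{f(w)\} \supset x\h y$: apply $\W$ from Theorem \ref{thm:isom} with $u = x \in \h^0$, which immediately gives $x\h y \subset \{f(x) \mid f \in \FM\} \subset \{f(w) \mid f\in \FM, w \in \h^0\}$. For $L_x\varphi(\h y \ast \h y) \subseteq x\h y$: this is the elementary computation above ($\varphi$ fixes $\h y$ setwise, then left-multiply by $x$). Combining, $L_x\varphi(\h y \ast \h y) \subseteq x\h y \subseteq \{f(w)\mid f\in \FM, w\in\h^0\}$, which is precisely the claimed inclusion. So in fact the statement reduces to Theorem \ref{thm:isom} plus the surjectivity $\varphi(\h y) = \h y$, and the proof is short once those are in hand.

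The step I expect to be the genuine obstacle is not visible in this chain of inclusions but in how this theorem is meant to be used downstream: to get equivalence with the Kawashima relations (Theorem \ref{thm:main10}) one also needs the reverse containment $\{f(w) \mid f\in\FM, w\in\h^0\} \subseteq L_x\varphi(\h y \ast \h y)$, which the excerpt attributes to \cite{T}; here the care is in checking that evaluating rooted tree maps on \emph{all} admissible words $w$, not just $w = x$, does not leave the Kawashima space — i.e. that $L_x\varphi(\h y\ast\h y)$ is already an $\FM$-stable (or at least large enough) subspace. So I would structure the write-up as: (i) recall $\varphi(\h y)=\h y$; (ii) deduce $L_x\varphi(\h y\ast\h y)\subseteq x\h y$; (iii) invoke Theorem \ref{thm:isom} with $u=x$ to get $x\h y\subseteq\{f(w)\}$; (iv) conclude. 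The only mild subtlety is confirming that $x\h y$ is literally equal to $L_x\varphi(\h y\ast\h y)$ rather than strictly larger — this would follow from the $\ast$-algebra structure making $\varphi(\h y\ast\h y)$ all of $\h y$, or at least all of $\h y$ modulo what is killed after left-multiplication by $x$ — and that is the one place where I would slow down and check the harmonic-product combinatorics rather than wave at it.
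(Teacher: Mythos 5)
Your chain of inclusions breaks at the step $\{f(x) \mid f \in \FM\} \subset \{f(w) \mid f\in \FM,\, w \in \h^0\}$: the letter $x$ is \emph{not} an admissible word. By definition $\h^0 = \Q + x\h y$, so every non-constant element of $\h^0$ ends in $y$; the single letter $x$ lies in neither summand. Consequently you are not allowed to take $u=x$, and your ``short proof'' establishes only that $L_x\varphi(\h y \ast \h y)$ is contained in $\{f(x)\mid f\in\FM\}$ --- a set which has no a priori relation to $\ker Z$, since Theorem \ref{thm:rtmrelation} only controls $f(\h^0)$. Producing a genuinely admissible $u\in\h^0$ together with a matching $f\in\FM$ is the entire content of the theorem, not bookkeeping: if $u=x$ were permitted, Theorem \ref{thm:main10} would be vacuous.

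The paper's proof supplies the admissible word as follows. Setting $\chi_x := \tau L_x \varphi$, Corollary 4.5 of \cite{T} gives the intertwining identity $f\chi_x = \chi_x H_w$ with $H_w(v)=w\ast v$ and $w=\chi_x^{-1}f(y)$; since $f(y)=-f(x)$ for $f\in\FM$, Theorem \ref{thm:isom} lets one invert this and choose $f=-\W^{-1}(\chi_x(w))$ for any prescribed $w\in\h y$. One also needs $L_x\varphi(\h y\ast\h y)=\tau L_x\varphi(\h y\ast\h y)$, which follows from Lemma 4.2 and Corollary 4.7 of \cite{T}. Then for given $v,w\in\h y$ one takes $u=\chi_x(v)=\tau(x\varphi(v))\in\tau(x\h y)=x\h y\subset\h^0$ (here admissibility holds because $\varphi(\h y)=\h y$ and $\tau$ preserves $x\h y$), and computes $\tau L_x\varphi(w\ast v)=\chi_x H_w(v)=f\chi_x(v)=f(u)$. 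Your observation that $L_x\varphi(\h y\ast\h y)\subset x\h y$ and that $\W$ is onto $x\h y$ are both correct and both used in the real argument, but they do not suffice by themselves; the missing ingredient is the identity $f\chi_x=\chi_x H_w$, which is what converts the harmonic product on the left-hand side into an evaluation of a rooted tree map at an element of $\h^0$.
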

\begin{proof}
Due to Lemma 4.2 in \cite{T} we know that \[(1-\tau)(x\h y) \subset L_x \varphi(\h y \ast \h y) \,.\]
So we have $(1-\tau)(x\h y)= \tau (1-\tau)(x\h y) \subset \tau L_x \varphi(\h y \ast \h y) $ and hence
\begin{align*}
 L_x \varphi(\h y \ast \h y) &= ((1-\tau) + \tau)  L_x \varphi(\h y \ast \h y) \subset (1-\tau)(x\h y) + \tau L_x \varphi(\h y \ast \h y) \\
 &\subset \tau L_x \varphi(\h y \ast \h y) \,.
\end{align*}
The opposite inclusion $\tau L_x \varphi(\h y \ast \h y) \subset  L_x \varphi(\h y \ast \h y)$ was shown in the proof of Corollary 4.7 in \cite{T} and therefore we have 
\begin{equation}\label{eq:keq1}
 L_x \varphi(\h y \ast \h y) = \tau L_x \varphi(\h y \ast \h y)\,.
\end{equation}
Following \cite{T} we define $\chi_x := \tau L_x \varphi$. Due to Corollary 4.5. in \cite{T} there exist for any $ f\in \FM$ a $w \in \h y$ such that
\begin{equation}\label{eq:fhident}
f \chi_x = \chi_x H_w \,,
\end{equation} 
where $H_w(v) = w \ast v$ for $w,v \in \h y$. The element $w$ is uniquely determined by $w=\chi_x^{-1} f(y)$. Since we have for any rooted tree map $f \in \FM$ that $f(y) = - f(x)$, we get $-\chi_x(w) = f(x)$. In Theorem \ref{thm:isom} we proved that the map $\W: \FF \rightarrow x \h y$ with $f  \mapsto f(x)$ is an isomorphism and hence for an arbitrary  $w\in \h y$ the rooted tree map $f = -\W^{-1}(\chi_x(w)) \in \FF$ satisfies  \eqref{eq:fhident}.

Now let $w,v$ be arbitrary elements in $\h y$. To proof the statement of the theorem we want to show that we can find a rooted tree map  $f\in \FM$ and a $u \in \h^0$, such that $L_x\varphi(w \ast v) = f(u)$. Because of \eqref{eq:keq1} it suffices to show that we can find such an $f$ and $u$ with $\tau L_x\varphi(w \ast v) = \chi_x H_w(v)= f(u) = f \chi_x ( \chi_x^{-1}u )$. By the above discussion we can choose $u= \chi_x(v)$ and $f=-\W^{-1}(\chi_x(w))$.
\end{proof}

\begin{remark}
The space $\FF$ is a proper subset of $\HH$, since $\, \in \HH$ but $\, \notin \FF$. Due to Theorem \ref{thm:main1} all rooted tree maps relations are already obtained by considering just the rooted trees in $\FF$, that is the space $\FM$. Due to numerical experiments we actually expect that the space $\FM$ is  the whole space $\HM$, i.e. the recursively defined elements at the beginning of Section \ref{sec:F} might give a basis of the space of rooted tree maps and in particular $\dim_\Q \HM_d = 2^{d-1}$. Since the number of rooted tree maps in degree $d$ is larger than $2^{d-1}$ for $d\geq 4$, this would give linear relations between rooted tree maps. For example we expect that we have for all $w\in \h$ 
\[ \,(w) = 2 \,\,\begin{tikzpicture}[scale=0.3,baseline={([yshift=-.5ex]current bounding box.center)}]
\def\cz{5}
\def\wi{0.5}

\newcommand{\ci}[1]{	
	\fill[black] (#1) circle (\cz pt);
	\draw (#1) circle (\cz pt);
}

\coordinate (R) at (0,0);
\coordinate (l1) at (-\wi,-1);
\coordinate (r1) at (\wi,-1);
\coordinate (l11) at (-\wi,-2);

\draw (R) to (l1);
\draw (R) to (r1);
\draw (l1) to (l11);

\ci{R}
\ci{l1}
\ci{r1}
\ci{l11}

\end{tikzpicture}(w) + \,  (w) - \begin{tikzpicture}[scale=0.3,baseline={([yshift=-.5ex]current bounding box.center)}]
\def\cz{5}
\def\wi{0.5}

\newcommand{\ci}[1]{	
	\fill[black] (#1) circle (\cz pt);
	\draw (#1) circle (\cz pt);
}

\coordinate (R) at (0,0);
\coordinate (a1) at (-\wi,-1);
\coordinate (a2) at (0,-1);
\coordinate (a3) at (\wi,-1);

\draw (R) to (a1);
\draw (R) to (a2);
\draw (R) to (a3);

\ci{R}
\ci{a1}
\ci{a2}
\ci{a3}

\end{tikzpicture}(w) - \begin{tikzpicture}[scale=0.3,baseline={([yshift=-.5ex]current bounding box.center)}]
\def\cz{5}
\def\wi{0.5}

\newcommand{\ci}[1]{	
	\fill[black] (#1) circle (\cz pt);
	\draw (#1) circle (\cz pt);
}

\coordinate (R) at (0,0);

\coordinate (l1) at (0,-1);
\coordinate (l11) at (\wi,-2);
\coordinate (l12) at (-\wi,-2);

\draw (R) to (l1);

\draw (l1) to (l11);
\draw (l1) to (l12);

\ci{R}
\ci{l1}
\ci{l11}
\ci{l12}
\end{tikzpicture}(w) \,. \]
\end{remark}

\end{document}